\numberwithin{equation}{section}
\theoremstyle{plain}
\newtheorem{thm}{Theorem}[section]
\newtheorem{prop}[thm]{Proposition}
\theoremstyle{definition}
\newtheorem{defin}{Definition}[section]
\newtheorem{remark}{Remark}[section]
\newcommand{\qede}{\hfill $\diamond$}
\newcommand{\kl}{\mathcal K \mathcal L}
\newcommand{\R}{\mathbb R}
\newcommand{\Z}{\mathbb Z}
\newcommand{\del}{\Delta}
\newcommand{\T}{T_{S,L_{\lambda}}}
\newcommand{\eps}{\varepsilon}
\newcommand{\e}{\mathbf e}
\renewcommand{\int}{\mathrm{int}}
\renewcommand{\S}{\mathcal S}
\renewcommand{\b}{\beta}
\renewcommand{\d}{\mathcal D}
\begin{document}
\title{A remark on the convergence of the Douglas-Rachford iteration in a non-convex setting}

\dedicatory{Dedicated to the memory of Jonathan M. Borwein}

\author{Ohad Giladi}

\address{School of Mathematical and Physical Sciences, University of Newcastle, Callaghan, NSW 2308, Australia}

\email{ohad.giladi@newcastle.edu.au}

\date{\today}

\subjclass[2010]{49N60, 47J25, 39A22}

\begin{abstract}
Using the construction of a Lyapunov function from~\cite{Ben15}, it is shown that the Douglas-Rachford iteration with respect to a sphere and a line in $\R^d$ is robustly $\kl$-stable. This implies a convergence which is stronger than uniform convergence on compact sets.
\end{abstract}

\keywords{Douglas-Rachford iteration, Lyapunov function, robust $\mathcal{KL}$-stability}

\maketitle

\section{Introduction}

Given a set $A\subseteq \R^d$, define the projection operator $P_A:\R^d \rightrightarrows \R^d$,
\begin{align*}
P_A(x) = \Big\{y \in A~\big|~ \|x-y\| = \inf_{z\in A}\|x-z\|\Big\}.
\end{align*}
In general $P_A$ can be multi-valued. Here and in what follows, $\|\cdot\|$ denotes the Euclidean norm on $\R^d$. Define also the reflection operator by $R_A = 2P_A-I$, $I$ being the identity operator on $\R^d$. Given two sets $A,B\subseteq \R^d$ define the Douglas-Rachford operator,
\begin{align}\label{def dr}
T_{A,B} = \frac{I+R_BR_A}{2}.
\end{align}
Given $x \in \R^d$, define the sequence $\{x_n\}_{n=0}^{\infty}\subseteq \R^d$ by the recursive condition
\begin{align}\label{iterate seq}
x_{n+1} = T_{A,B}x_n = T^n_{A,B}x_0, ~~ x_0 = x.
\end{align}
The sequence defined in~\eqref{iterate seq} is known as the \emph{Douglas-Rachford iteration} of $x$. A well known question concerns the asymptotic behaviour of this sequence. This question has application in the case where both $A$ and $B$ are convex, as well as in the case where one of the sets is not convex. See for example~\cites{BCL 02, LM79} for the convex case and~\cites{ERT07, GE08} for the non-convex case.

%\smallskip

In the case $A$ is convex, it is known that the projection map $P_A$ is firmly non-expansive, that is, for all $x,y \in \R^d$,
\begin{align*}
\|P_Ax-P_Ay\|^2 + \|(I-P_A)x - (I-P_A)y\|^2 \le \|x-y\|^2.
\end{align*} 
See for example~\cite{GK90}*{Thm.~12.2}. It then follows that if both $A$ and $B$ are convex, $T_{A,B}$ is also firmly non-expansive. See for example~\cite{GK90}*{Thm.~12.1}. From the non-expansiveness of $T_{A,B}$ it follows that $\{x_n\}_{n=0}^{\infty}$ is norm convergent, with norm convergence replaced by weak convergence in the case of an infinite dimensional space.

%\smallskip

While the convex case is well understood, much less is known in the non-convex setting, when either $A$ or $B$ is not convex. One of the simplest non-convex cases is the case of a sphere and a line. This case is of particular interest also because the sphere is a model of many reconstruction problems in which only the magnitude of a phase is measured. This case was studied in~\cites{AB13, Ben15, BS11}. Other non-convex cases were considered in~\cite{ABT16, BG16, HL13, Pha16}.

%\smallskip 

Let $\e_1,\dots,\e_n$ be the standard basis vectors in $\R^d$. Given $\lambda \in \R$, define the following sets,
\begin{align*}
L_{\lambda} = \big\{t\e_1+\lambda \e_2~ | ~ t \in \R\big\}, \quad S= \big\{ x\in \R^d~|~\|x\|=1\big\},
\end{align*}
that is, a line and the unit Euclidean sphere in $\R^d$. In this case, the Douglas-Rachford operator is given explicitly by the following formula,
\begin{align}\label{for dr}
\T x = \frac{\langle x,\e_1\rangle}{\|x\|}\e_1 + \left(\left(1-\frac 1{\|x\|}\right)\langle x,\e_2\rangle+ \lambda\right) \e_2 + \left(1-\frac 1 {\|x\|}\right)\sum_{j=3}^d\langle x,\e_j\rangle\e_j,
\end{align}
where here and in what follows, $\langle \cdot\, , \cdot\rangle$ denotes the standard inner product in $\R^d$. Define also the following sets
\begin{align}
\nonumber H_0 & = \big\{x\in \R^d~\big|~ \langle x,\e_1\rangle = 0\big\}\setminus\{0\}, 
\\
\label{def H} H_+ & = \big\{x\in \R^d~\big|~ \langle x,\e_1\rangle > 0\big\},
\\
\nonumber H_- & = \big\{x\in \R^d~\big|~ \langle x,\e_1\rangle < 0\big\}.
\end{align}
By~\eqref{for dr}, all three sets are invariant under $\T$. Assuming that $\lambda \in [0,1]$ (the case $\lambda\in [-1,0]$ is completely analogous), the only fixed points of $\T$ are the intersection points of $L_{\lambda}$ and $S$,
\begin{align}\label{def x*}
x^* = \sqrt{1-\lambda^2}\,\e_1+\lambda \e_2 \quad\text{and} \quad x_* = -\sqrt{1-\lambda^2}\,\e_1+\lambda \e_2.
\end{align}
In~\cite{BS11} it was shown that when $\lambda\in (0,1)$, the Douglas-Rachford iteration is locally norm convergent around the intersection points, and later in~\cite{AB13} an explicit domain of convergence was given. In the case $\lambda=0$, it was shown in~\cite{BS11} that in fact we have global convergence on $H_-\cup H_+$. In the case $\lambda=1$, it was also shown in~\cite{BS11} that if $x_0\in H_-\cup H_+$, then the Douglas-Rachford iteration converges to a point of the form $\hat{y} \ \e_2$, where $\hat y \in (1,\infty)$. Finally, it was shown in~\cite{BS11}, that when $\lambda >1$ and $x_0\in H_-\cup H_+$ or when $x_0 \in H_0$, the Douglas-Rachford iteration is divergent. In~\cite{Ben15} it was shown that for every $\lambda\in (0,1)$, the Douglas-Rachford iteration converges globally in norm whenever $x_0\in H_-\cup H_+$. From~\eqref{for dr}, it follows that the behaviour of $\T$ on $H_+$ and $H_-$ are similar. Thus, it is enough to consider the case where the initial point is in $H_+$. In this case, the only intersection point we need to consider is $x^*$. Define the following domain $\del \subseteq \R^d$, 
\begin{align}\label{def del}
\del = \left\{x\in \R^d~\big|~ \langle x,\e_1\rangle \in (0,1]\right\}.
\end{align}
It follows from~\eqref{for dr} that 
\begin{align}\label{prop invariance}
\T(\del) \subseteq \T(H_+)\subseteq \del .
\end{align} 

%\smallskip

In~\cite{Ben15}, an ingenious construction of a Lyapunov function for $\T$ was presented, which implied global convergence of the iteration~\eqref{iterate seq}. Given $x\in \R^d$ and $t>0$, let $B(x,t)$ denote the open Euclidean ball centred at $x$ with radius $t$. The following is the main result of~\cite{Ben15}.

%\smallskip

\begin{thm}[\cite{Ben15}]\label{thm Ben}
Assume that $\lambda \in [0,1)$. Define the function $F:\del \to \R$ as follows,
\begin{align}\label{def F}
F(x) = \frac 1 2 \|x - \lambda \e_2\|^2 -\lambda \log\left(1+\sqrt{1-\langle x,\e_1\rangle^2}\right) + \lambda\sqrt{1-\langle x,\e_1\rangle^2}+ (\lambda-1)\log\langle x,\e_1\rangle.
\end{align} 
Then $F$ satisfies $F(x^*) < F(x)$ for all $x\in \del\setminus\{x^*\}$ and $F(\T x) \le F(x)$ for all $x\in \del$. Moreover, for every $t >0$, 
\begin{align*}%\label{cond inf}
\sup_{x\in \del\setminus B(x^*,t)}\big[F(\T x) - F(x)\big] < 0.
\end{align*}
In particular, $F(\T x) = F(x)$ if and only if $x=x^*$.
\end{thm}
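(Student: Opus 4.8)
The plan is to deduce the three assertions — that $x^*$ is the strict global minimiser of $F$ on $\del$, that $F(\T x)\le F(x)$, and that this decrease is uniformly bounded away from $0$ outside any ball around $x^*$ — from elementary one- and two-variable analysis. Throughout write $x_j=\langle x,\e_j\rangle$, $r=\|x\|$, and $a=x_1$; for $x\in\del$ one has $a\in(0,1]$ and $r\ge a>0$, and by~\eqref{for dr} the point $\T x$ has $\e_1$-coordinate $a/r$, $\e_2$-coordinate $(1-1/r)x_2+\lambda$, and $\e_j$-coordinate $(1-1/r)x_j$ for $j\ge 3$. It will be used repeatedly that $F(x)=\tfrac12\|x-\lambda\e_2\|^2+g(a)$, where
\[
g(a):=-\lambda\log\!\big(1+\sqrt{1-a^2}\,\big)+\lambda\sqrt{1-a^2}+(\lambda-1)\log a .
\]

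\emph{The strict minimum.} I would argue by slicing in the $\e_1$-direction. For fixed $a$, the function $\tfrac12\|x-\lambda\e_2\|^2$ restricted to the hyperplane $\{x_1=a\}$ attains its minimum $\tfrac12 a^2$ exactly at $a\e_1+\lambda\e_2$, so $F(x)\ge\psi(a):=\tfrac12 a^2+g(a)$ for every $x\in\del$, with equality if and only if $x$ lies on the ray $\{t\e_1+\lambda\e_2:t>0\}$. A short computation gives $\psi'(a)=a^{-1}\sqrt{1-a^2}\,\big(\lambda-\sqrt{1-a^2}\,\big)$, which on $(0,1)$ has the sign of $a-a^*$ with $a^*=\sqrt{1-\lambda^2}$; hence $\psi$ is decreasing on $(0,a^*]$ and increasing on $[a^*,1]$, so $a^*$ is a strict global minimiser of $\psi$. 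Since $x^*=a^*\e_1+\lambda\e_2$ lies on the ray above, combining the two equality statements gives $F(x)>F(x^*)$ for all $x\in\del\setminus\{x^*\}$.

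\emph{Monotonicity.} The components of $\T x-\lambda\e_2$ along $\e_2,\dots,\e_d$ are $(1-1/r)$ times those of $x$, so by the Pythagorean identity
\[
\|\T x-\lambda\e_2\|^2=\frac{a^2}{r^2}+\Big(1-\frac1r\Big)^2\big(r^2-a^2\big)=(r-1)^2-\frac{a^2(r-2)}{r};
\]
hence $F(\T x)$ depends on $x$ only through the pair $(a,r)$, say $F(\T x)=\Phi(a,r)$. On the other hand $F(x)=\tfrac12\big(r^2-2\lambda x_2+\lambda^2\big)+g(a)$, so using $\lambda\ge0$ and $x_2\le\sqrt{r^2-a^2}$,
\[
F(x)=\widetilde F(a,r)+\lambda\big(\sqrt{r^2-a^2}-x_2\big)\ge\widetilde F(a,r):=\tfrac12\big(r^2-2\lambda\sqrt{r^2-a^2}+\lambda^2\big)+g(a),
\]
with equality exactly when $\lambda=0$ or $x_2=\sqrt{r^2-a^2}$ (the latter also forcing $x_j=0$ for $j\ge3$). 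Consequently
\[
F(\T x)-F(x)=-\,G(a,r)-\lambda\big(\sqrt{r^2-a^2}-x_2\big),\qquad G(a,r):=\widetilde F(a,r)-\Phi(a,r),
\]
and the whole statement reduces to the scalar inequality $G(a,r)\ge0$ for $a\in(0,1]$, $r\ge a$, with equality only at $(a,r)=(a^*,1)$. Introducing $b:=\sqrt{r^2-a^2}\ge0$ this becomes a concrete inequality in two free variables — more symmetrically, in $\big(\sqrt{1-a^2},\,b\big)$, whose range is $[0,1)\times[0,\infty)$ and whose minimiser is $(\lambda,\lambda)$ — which I would prove by freezing $a$ and examining the sign of $\partial_r G(a,r)$, reducing it to a one-variable estimate. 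This sharp inequality, tight at a single interior point, is where I expect the bulk of the work to lie.

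\emph{Strict decrease and conclusion.} Granting $G\ge0$, the last display gives $F(\T x)\le F(x)$ on $\del$, and equality forces both $G(a,r)=0$ — hence $(a,r)=(a^*,1)$ — and $\lambda\big(\sqrt{r^2-a^2}-x_2\big)=0$: if $\lambda>0$ the latter gives $x_2=\sqrt{r^2-a^2}=\lambda$ and $x_j=0$ for $j\ge3$, i.e. $x=x^*$, while if $\lambda=0$ then $a^*=1$ and $(a,r)=(1,1)$ already forces $x=\e_1=x^*$; since $\T x^*=x^*$, this is the ``in particular'' claim. For the ``moreover'' statement, fix $t>0$. Since $g\ge0$ on $(0,1]$ (as $g'<0$ there and $g(1)=0$), a short estimate gives $G(a,r)\ge(1-\lambda)(r-\log r)-C$ with $C$ an absolute constant, so $G$ exceeds a fixed positive number once $r$ is large; one also checks $\liminf_{a\to0^+}G(a,r)\ge\tfrac12(1-\lambda)^2>0$ uniformly in $r$; and on the remaining compact part of the parameter region $G$ is continuous and positive off $(a^*,1)$. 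A compactness argument — observing that $x\notin B(x^*,t)$ forces either $(a,r)$ to be bounded away from $(a^*,1)$ or, when $\lambda>0$, $\sqrt{r^2-a^2}-x_2$ to be bounded below by a constant depending on $t$ — then yields $\sup_{x\in\del\setminus B(x^*,t)}\big[F(\T x)-F(x)\big]<0$.
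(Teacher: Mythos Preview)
The present paper does not prove this theorem: it is quoted from Benoist~\cite{Ben15} (with the remark that the case $\lambda=0$ is also covered) and then used as a black box. So there is no proof in this paper to compare your proposal against; the relevant benchmark is Benoist's original argument.

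That said, your reduction is correct and matches the structure of Benoist's proof: $F(\T x)$ depends only on the pair $(a,r)=(\langle x,\e_1\rangle,\|x\|)$, the nonnegative term $\lambda(\sqrt{r^2-a^2}-x_2)$ absorbs the remaining coordinates, and the decrease statement collapses to the two-variable inequality $G(a,r)\ge 0$ on $\{0<a\le 1,\ r\ge a\}$ with equality exactly at $(\sqrt{1-\lambda^2},1)$. Your strict-minimum argument and your analysis of the equality case are complete and correct.

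The genuine gap is that you do not prove $G\ge 0$: you offer only the one-line plan ``freeze $a$ and examine $\partial_r G$''. This inequality is essentially the entire content of the theorem and occupies the bulk of Benoist's paper. Note that $G(a,\cdot)$ is not monotone in $r$ --- it vanishes at the interior point $r=1$ when $a=\sqrt{1-\lambda^2}$ --- so examining the sign of $\partial_rG$ merely locates a critical point $r=r(a)$, after which one must still show that the one-variable function $a\mapsto G(a,r(a))$ is nonnegative and vanishes only at $a=\sqrt{1-\lambda^2}$; this residual estimate is not routine. Since both the ``moreover'' clause and the equality characterisation rest on $G\ge 0$ together with its strict-equality case, the proposal as written is an accurate outline but not a proof.
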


%\smallskip

Note that in~\cite{Ben15}, the main result is stated for $\lambda \in (0,1)$, but it is in fact true for the case $\lambda = 0 $ as well.

If $K\subseteq H_+$ and for $n=1,2,\dots$ we define $f_n:K \to \R$ by $f_n(x) = F(\T^nx)-F(x^*)$, then by Theorem~\ref{thm Ben} $\{f_n\}_{n=1}^{\infty}$ is a decreasing sequence of continuous functions which converges point-wise to 0. Therefore, if $K$ is compact, then by Dini's convergence theorem, $\{f_n\}_{n=1}^{\infty}$ converges uniformly to 0 on $K$. This in particular implies that $\T^n x \stackrel{n\to \infty}{\longrightarrow} x^*$ uniformly for $x\in K$. Here and in what follows any convergence of vectors means convergence in the Euclidean norm on $\R^d$. However, using Theorem~\ref{thm converge} below, it is shown that on compact sets in $H_+$ we obtain a type of convergence which is stronger than uniform convergence. See Section~\ref{sec state} below for the exact formulation.

%\smallskip

The rest of the note is organised as follows. In Section~\ref{sec stable}, we recall some preliminaries and notations from the theory of discrete time dynamical systems. Then, in Section~\ref{sec state}, we state the main results in the note, which are then proved in Section~\ref{sec proofs}.

%\medskip

\section{Preliminaries and statement of the main results}

\subsection{Stability of discrete time dynamical systems}\label{sec stable}

Assume that $\d$ is a set in $\R^d$, and let $T:\d \rightrightarrows \d$ be a set-valued map from $\d$ to subsets of $\d$. For $n \in \Z_+ = \{ 0,1,2,\dots\}$, consider the difference inclusion with initial condition
\begin{align}\label{diff eq}
x_{n+1} \in T x_n, ~~ x_0 \in \d.
\end{align}
Let $\S(x, T)$ be the set of solutions to~\eqref{diff eq} with $x_0=x$, and let $\phi(x,n)\in \S(x, T)$ denote a solution to~\eqref{diff eq}, that is, $\phi:\d\times \Z_+ \to \d$ is such that $\phi(x,0) = x$ and $\phi(x,n+1) \in T(\phi(x,n))$ for all $n\in \Z_+$. For $K\subseteq \d$ let
\begin{align*}
\S(K, T) = \bigcup_{x\in K}\S(x, T).
\end{align*}

%\smallskip

Next, recall some definitions regarding the stability of the system~\eqref{diff eq}. Let $\R_+ = [0,\infty)$. A function $\b: \R_+\times \R_+ \to \R_+$ is said to belong to class $\kl$ if for every $t\ge 0$, $\b(\cdot,t)$ is continuous, strictly increasing, and $\b(0,t)=0$, and also for every $s \ge 0$, $\b(s,\cdot)$ is non-increasing, and satisfies $\beta(s,t) \stackrel{t\to \infty}{\longrightarrow} 0$. We have the following definition.

\begin{defin}%[$\kl$-stability]
Assume that $\d\subseteq \R^d$ and $\omega_1,\omega_2:\d \to \R_+$ are continuous functions. The difference inclusion~\eqref{diff eq} is said to be $\kl$-stable with respect to $(\omega_1,\omega_2)$ if there exists $\beta \in \kl$ such that for every $x\in \d$, every $\phi\in \S(x)$, and every $n\in \Z_+$, 
\begin{align*}
\omega_1(\phi(x,n)) \le \beta(\omega_2(x),n).
\end{align*}
\end{defin}

%\smallskip

Let $\sigma:\d\to \R_+$ be such that $B[x,\sigma(x)] \subseteq \d$ for all $x\in \d$, where here and in what follows $B[x,r]$ denotes the closed ball around $x$ with radius $r$ with respect to the Euclidean norm. Given a set $K\subseteq \d$, define
\begin{align*}
K_\sigma = \bigcup_{x\in K}B[x,\sigma(x)].
\end{align*}
Note that $K_\sigma \subseteq \d$. Given an operator $T:\d \rightrightarrows \d$, define also $T_{\sigma}$, the $\sigma$-perturbation of $T$,
\begin{align*}
T_{\sigma}x = \bigcup_{y\in T(B[x,\sigma(x)])}B[y, \sigma(y)]. 
\end{align*}
Note that if $K\subseteq \d$, then $T_\sigma(K) = (T(K_\sigma))_\sigma$. Denote by $\S_{\sigma}(x, T)$ the set of solutions to the perturbed difference inclusion $x_{n+1}\in T_{\sigma}x_n$ with initial condition $x_0 = x$. Note that in particular, we have $\S_0(x, T) = \S(x, T)$, where here 0 denotes the constant zero function. As before, for $K\subseteq \d$, let
\begin{align*}%\label{def s sigma}
\S_{\sigma}(K, T) =  \bigcup_{x\in K}\S_{\sigma}(x, T).
\end{align*}

%\smallskip

Given a continuous function $\omega_1:\d \to \R_+$ define the following set
\begin{align*}
\mathcal A_{\sigma} = \Big\{x\in \d~\Big|~ \sup_{n\in \Z_+}\sup_{\phi\in \S_{\sigma}(x)}\omega_1(\phi(x,n)) = 0\Big\},
\end{align*}
and let 
\begin{align}\label{def a}
\mathcal A = \Big\{x\in \d~\Big|~ \sup_{n\in \Z_+}\sup_{\phi\in \S(x)}\omega_1(\phi(x,n)) = 0\Big\}.
\end{align}

Next, recall the notion of robust $\kl$-stability.

%\smallskip

\begin{defin}\label{def robust kl}%[Robust $\kl$-stability]
Assume that $\d \subseteq \R^d$ and $\omega_1,\omega_2 :\d \to \R_+$ are continuous functions. The difference inclusion~\eqref{diff eq} is said to be robustly $\kl$-stable with respect to $(\omega_1,\omega_2)$ on $\d$ if there exists a continuous function $\sigma:\d\to\R_+$ such that
\begin{enumerate}
\item \label{ball contained} For all $x\in \d$, $B[x,\sigma(x)] \subseteq \d$;
\item \label{always positive} For all $x\in \d \setminus \mathcal A$, $\sigma(x) >0$;
\item \label{stable set} $\mathcal A_{\sigma} = \mathcal A$;
\item The difference inclusion $x_{n+1} \in T_{\sigma}x_n$ is $\kl$-stable with respect to $(\omega_1,\omega_2)$ on $\d$.
\end{enumerate}
\end{defin}

%\smallskip

\begin{remark}\label{rmk smaller}
Note that if $x_{n+1} \in T_{\sigma}x_n$ is $\kl$-stable with respect to $(\omega_1,\omega_2)$ and $\tau:\d\to \R_+$ is such that $\tau \le \sigma$ on $\d$, then $x_{n+1} \in T_{\tau}x_n$ is $\kl$-stable with respect to $(\omega_1,\omega_2)$ on $\d$ as well, since $\S_\tau(K,T) \subseteq \S_\sigma(K,T)$. \qede
\end{remark}

%\smallskip

A function $\alpha:\R_+\to \R_+$ is said to belong to class $\mathcal K$ if it is continuous, strictly increasing, and $\alpha(0) = 0$. A function $\alpha :\R_+\to \R_+$ is said to belong to class $\mathcal K_{\infty}$ if $\alpha \in \mathcal K$ and in addition $\lim_{t\to \infty}\alpha(t) = \infty$. A function $\alpha :\R_+\to \R_+$ is said to be positive definite if $\alpha(t) = 0$ if and only if $t=0$. 

%\smallskip

Recall next the following notion of a Lyapunov function.

%\smallskip

\begin{defin}\label{def lyap}
Assume that $\d \subseteq \R^d$ and $\omega_1,\omega_2:\d \to \R_+$ are continuous functions. A function $V:\d \to \R_+$ is said to be a Lyapunov function with respect to $(\omega_1,\omega_2)$ on $\d$ for the difference inclusion~\eqref{diff eq} if there exist $\alpha_1,\alpha_2\in \mathcal K_{\infty}$ and a continuous positive definite function $\alpha$ such that for all $x\in \d$,
\begin{eqnarray}
\label{condition between} &\alpha_1(\omega_1(x)) \le V(x) \le \alpha_2(\omega_2(x)),&
\\
\label{condition alpha} &\sup_{y\in Tx}V(y) \le V(x) -\alpha(V(x)),&
\\
\label{condition iff} &V(x) = 0 \iff x\in \mathcal A.&
\end{eqnarray}
\end{defin}

%\smallskip

It is known that there is a close connection between the stability properties of a dynamical system and the existence and properties of a Lyapunov function. Originally this was known for continuous time dynamical systems, but results for the discrete time case have also been obtained. See for example the survey~\cite{Kel15}. In particular, the following result is Theorem 2.8 in~\cite{KT05}.

\begin{thm}[\cite{KT05}]\label{thm kl}
Assume that $\d \subseteq \R^d$ is open, and $T:\d \rightrightarrows \d$ is such that $Tx$ is compact and non-empty for all $x\in \d$. Assume also that there exists a continuous Lyapunov function on $\d$ with respect to two continuous functions $\omega_1,\omega_2:\d \to \R_+$. Then the difference inclusion~\eqref{diff eq} is robustly $\kl$-stable with respect to $(\omega_1,\omega_2)$.
\end{thm}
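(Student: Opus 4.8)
The plan is a direct Lyapunov argument in three movements: (i) turn the decrease inequality~\eqref{condition alpha} into a scalar comparison and read off plain $\kl$-stability of~\eqref{diff eq}; (ii) manufacture a continuous perturbation radius $\sigma$ from the Lyapunov data; (iii) verify the four clauses of Definition~\ref{def robust kl}, the last of which is simply (i) applied to $T_\sigma$.

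For (i): after replacing $\alpha$ by $\min(\alpha,\mathrm{id})$ — still continuous, positive definite, and still compatible with~\eqref{condition alpha} — we may assume $\alpha(s)\le s$, so that $\rho(s):=s-\alpha(s)$ obeys $0\le\rho(s)<s$ for $s>0$ and $\rho(0)=0$. Along any solution $\phi\in\S(x)$, \eqref{condition alpha} gives $V(\phi(x,n+1))\le\rho\big(V(\phi(x,n))\big)$; passing to the running maximum $\hat\rho(s)=\max_{0\le\tau\le s}\rho(\tau)$, which is continuous, non-decreasing, dominates $\rho$, and keeps $0\le\hat\rho(s)<s$ for $s>0$, and iterating monotonically, we get $V(\phi(x,n))\le\hat\rho^{\,(n)}(V(x))$. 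The sequence $\hat\rho^{\,(n)}(s)$ is non-increasing and converges to a fixed point of $\hat\rho$, which must be $0$ since $\hat\rho$ has no positive fixed point; a standard comparison lemma for such recursions (see, e.g., \cite{Kel15}) then supplies $\beta_0\in\kl$ with $\hat\rho^{\,(n)}(s)\le\beta_0(s,n)$. Together with the sandwich~\eqref{condition between}, the function $\beta(s,n)=\alpha_1^{-1}\big(\beta_0(\alpha_2(s),n)\big)$, which lies in $\kl$ because $\alpha_1,\alpha_2\in\mathcal K_\infty$, satisfies $\omega_1(\phi(x,n))\le\beta(\omega_2(x),n)$, i.e.\ $\kl$-stability holds. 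Note also that~\eqref{condition alpha} with $V\ge 0$ forces $Tx\subseteq\{V=0\}$ whenever $x\in\{V=0\}$, and by~\eqref{condition iff} this says $\mathcal A$ is forward invariant under $T$.

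For (ii): the useful feature of a continuous Lyapunov function is the slack in~\eqref{condition alpha} — on $\d\setminus\mathcal A$ the margin $V(x)-\sup_{y\in Tx}V(y)$ is bounded below by $\alpha(V(x))>0$ — together with the fact that a positive lower semicontinuous function on the open set $\d\setminus\mathcal A$ is minorised by a continuous positive one. Feeding this minorant, the local moduli of continuity of $V$ on compact subsets of $\d$, and $\tfrac12\,\mathrm{dist}(\cdot,\partial\d)$ into one construction, one produces a continuous $\sigma:\d\to\R_+$ that vanishes on $\mathcal A$, is strictly positive on $\d\setminus\mathcal A$, satisfies $B[x,\sigma(x)]\subseteq\d$, and is small enough that $\sup_{y\in T_\sigma x}V(y)\le V(x)-\tfrac12\alpha(V(x))$ for every $x\in\d$. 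I expect this to be the main obstacle: these local requirements must all be met at once by a single continuous function, the standard device being to pick pointwise admissible radii and glue them via a partition of unity (or an infimal-convolution smoothing), taking care that the result vanishes exactly on $\mathcal A$.

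Granting such a $\sigma$, clauses~\eqref{ball contained} and~\eqref{always positive} of Definition~\ref{def robust kl} hold by construction. For~\eqref{stable set}: $\S(x)\subseteq\S_\sigma(x)$ gives $\mathcal A_\sigma\subseteq\mathcal A$; conversely, if $x\in\mathcal A$ then $\sigma=0$ on $\mathcal A$ and the forward invariance of $\mathcal A$ under $T$ give $T_\sigma z=Tz$ for all $z\in\mathcal A$, so every $\phi\in\S_\sigma(x)$ stays in $\mathcal A$, and since $\omega_1\equiv0$ on $\mathcal A$ (take $n=0$ in~\eqref{def a}) we conclude $x\in\mathcal A_\sigma$. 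Finally, the perturbed decrease from (ii) together with~\eqref{condition between} shows that $V$ (with $\tfrac12\alpha$ in place of $\alpha$) is a Lyapunov function for $x_{n+1}\in T_\sigma x_n$, so running argument (i) once more with $T_\sigma$ replacing $T$ yields $\kl$-stability of the perturbed inclusion — the fourth clause. Hence~\eqref{diff eq} is robustly $\kl$-stable with respect to $(\omega_1,\omega_2)$.
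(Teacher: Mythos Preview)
The paper does not prove this theorem at all: it is quoted verbatim as Theorem~2.8 of~\cite{KT05} and invoked as a black box in the proof of Theorem~\ref{thm kl dr}. There is therefore no ``paper's own proof'' to compare your proposal against.

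That said, your sketch is a faithful outline of the standard Lyapunov-to-robust-$\kl$ argument (the scalar comparison in (i), the construction of a perturbation radius $\sigma$ in (ii), and the verification of Definition~\ref{def robust kl} in (iii)), and is broadly in the spirit of the Kellett--Teel approach. As you yourself flag, the genuine work is step~(ii): producing a single continuous $\sigma$ that simultaneously respects the distance to $\partial\d$, the local modulus of continuity of $V$, and the decrease margin $\alpha(V(\cdot))$, and that vanishes exactly on $\mathcal A$. Your description (``pick pointwise admissible radii and glue via a partition of unity'') is a plausible strategy but not a proof; in particular, the set-valued map $x\mapsto T_\sigma x$ must remain well-behaved (e.g.\ have the compactness needed to take suprema in~\eqref{condition alpha}), and the uniform decrease $\sup_{y\in T_\sigma x}V(y)\le V(x)-\tfrac12\alpha(V(x))$ requires control not just of $V$ near $x$ but of $V$ near every point of $Tx$, which your gluing must track. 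None of this is wrong, but it is where the substance of~\cite{KT05} lies, and your proposal defers it rather than carrying it out.
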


%\smallskip

\subsection{Statement of the main results}\label{sec state}

Using the explicit construction from Theorem~\ref{thm Ben} in Theorem~\ref{thm kl}, we obtain the following result.

%\smallskip

\begin{thm}\label{thm kl dr}
Assume that $\lambda \in [0,1)$ and let $H_+$ be the domain defined in~\eqref{def H}. Then there exists a continuous Lyapunov function $V:H_+ \to \R_+$ in the sense of Definition~\ref{def lyap} such that the Douglas-Rachford iteration $x_{n+1} = \T x_n$, $x_0 \in H_+$, is robustly $\kl$-stable with respect to $\omega_1 = \omega_2 = V$.
\end{thm}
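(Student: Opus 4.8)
The plan is to deduce the statement from Theorem~\ref{thm kl}, applied with $\d=H_+$, $T=\T$, and $\omega_1=\omega_2=V$ for an appropriate function $V$ to be built. The set $H_+$ is open; since $\|x\|\ge\langle x,\e_1\rangle>0$ throughout $H_+$, the operator $\T$ is continuous on $H_+$ and, being single-valued, $\T x$ is compact and non-empty for every $x$; and by~\eqref{prop invariance} it maps $H_+$ into $\del\subseteq H_+$. Observe also that, once $\omega_1=\omega_2=V$, inequality~\eqref{condition between} of Definition~\ref{def lyap} holds automatically with $\alpha_1=\alpha_2$ equal to the identity, which belongs to $\mathcal K_\infty$. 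Hence the whole proof reduces to exhibiting a \emph{continuous} function $V:H_+\to\R_+$ and a continuous positive definite function $\alpha$ satisfying conditions~\eqref{condition alpha} and~\eqref{condition iff}.

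The key idea for the construction is that, by~\eqref{prop invariance}, one step of the iteration already lands in the domain $\del$ on which the Lyapunov function $F$ of Theorem~\ref{thm Ben} is defined. I would therefore set
\begin{align*}
V(x) \ =\ F(\T x) - F(x^*), \qquad x\in H_+.
\end{align*}
Continuity of $V$ on $H_+$ is immediate from continuity of $\T$ there and of $F$ on $\del$. By Theorem~\ref{thm Ben}, $F(y)\ge F(x^*)$ for every $y\in\del$, with equality precisely at $y=x^*$; hence $V\ge 0$, and $V(x)=0$ if and only if $\T x=x^*$. Since $\T x^*=x^*$, for any such $x$ the orbit is $x,x^*,x^*,\dots$, along which $V$ vanishes identically, so that (recalling~\eqref{def a} and that $\T$ is single-valued) $\{x\in H_+:V(x)=0\}=\{x\in H_+:\T x=x^*\}=\mathcal A$, which is exactly~\eqref{condition iff}.

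It remains to verify~\eqref{condition alpha}. Put $y=\T x\in\del$; then $V(x)-V(\T x)=F(y)-F(\T y)$, which is $\ge0$ by Theorem~\ref{thm Ben}, and the task is to bound it below by a positive definite function of $V(x)=F(y)-F(x^*)$. The decisive observation is that for each $s>0$ the sub-level set $\{y\in\del:F(y)<F(x^*)+s\}$ is relatively open in $\del$ and contains $x^*$, hence contains $\del\cap B(x^*,t)$ for some $t=t(s)>0$; consequently
\begin{align*}
\{y\in\del:F(y)-F(x^*)\ge s\} \ \subseteq\ \del\setminus B\big(x^*,t(s)\big),
\end{align*}
and the last clause of Theorem~\ref{thm Ben} shows that
\begin{align*}
m(s) \ :=\ \inf\big\{\,F(y)-F(\T y)\ \big|\ y\in\del,\ F(y)-F(x^*)\ge s\,\big\} \ \ge\ -\!\!\sup_{y\in\del\setminus B(x^*,t(s))}\!\!\big[F(\T y)-F(y)\big] \ >\ 0.
\end{align*}
Since $m:(0,\infty)\to(0,\infty)$ is non-decreasing, a standard averaging argument produces a continuous positive definite $\alpha$ with $\alpha(s)\le m(s)$ for all $s\ge0$; for instance one may take $\alpha(s)$ to be the mean value of $u\mapsto\min\{m(u),u\}$ over $[0,s]$, with $\alpha(0)=m(0)=0$. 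Then, for every $x\in H_+$,
\begin{align*}
V(\T x) \ =\ V(x) - \big(F(y)-F(\T y)\big) \ \le\ V(x) - m(V(x)) \ \le\ V(x) - \alpha(V(x)),
\end{align*}
which is~\eqref{condition alpha}. Thus $V$ is a continuous Lyapunov function for the iteration with respect to $(V,V)$, and Theorem~\ref{thm kl} yields the asserted robust $\kl$-stability.

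I expect the only genuinely delicate point to be the passage from the qualitative assertion of Theorem~\ref{thm Ben} — that $F$ strictly decreases outside each ball about $x^*$ — to the Lyapunov-type estimate~\eqref{condition alpha}, in which the decrease is controlled by $V$ itself. The bridge is precisely the elementary remark that open sub-level sets of $F$ are neighbourhoods of $x^*$, so that values of $F$ bounded away from its minimum force $y$ to be bounded away from $x^*$; the remaining work, including the manufacture of the comparison function $\alpha$, is routine bookkeeping.
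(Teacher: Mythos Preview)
Your proof is correct and follows the same approach as the paper: set $V(x)=F(\T x)-F(x^*)$, verify the conditions of Definition~\ref{def lyap} with $\omega_1=\omega_2=V$ and $\alpha_1=\alpha_2=\mathrm{id}$, and invoke Theorem~\ref{thm kl}. The only differences are cosmetic: for~\eqref{condition alpha} the paper (Proposition~\ref{prop alpha}) builds $\alpha$ through an auxiliary function $g(t)=\inf_{\del\setminus B(x^*,t)}[F-F\circ\T]$ and proves continuity of $g$ and of $\alpha$ by hand, whereas you pass directly to the non-decreasing minorant $m$ and average to force continuity; and your verification of~\eqref{condition iff}, identifying both $\{V=0\}$ and $\mathcal A$ with $\{x\in H_+:\T x=x^*\}$, sidesteps the paper's additional claim that this set reduces to $\{x^*\}$.
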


%\smallskip

Theorem~\ref{thm kl dr} implies the following convergence result for the Douglas-Rachford iteration.

%\smallskip

\begin{thm}\label{thm converge}
Assume that $\lambda \in [0,1)$ and let $H_+$ be the domain defined in~\eqref{def H}. Then there exists a function $\tau: H_+\to \R_+$ which satisfies $\tau(x) >0$ for all $x\in H_+\setminus\{x^*\}$, and such that for every compact set $K\subseteq H_+$, $\phi \in \S_\tau(K,\T)$ converges uniformly to $x^*$, that is,
\begin{align*}
\lim_{n\to \infty}\sup_{\phi \in \S_{\tau}(K,\T)}\|\phi(x,n) - x^*\| = 0.
\end{align*}
\end{thm}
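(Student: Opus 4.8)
The plan is to deduce Theorem~\ref{thm converge} from Theorem~\ref{thm kl dr} by the standard argument that turns a $\kl$ estimate into uniform convergence, using three properties of the Lyapunov function $V$ furnished by the proof of Theorem~\ref{thm kl dr}: it is continuous on $H_+$, it vanishes exactly at $x^*$, and it has compact sublevel sets in $H_+$. First I would unwind the definitions. By Theorem~\ref{thm kl dr} the inclusion $x_{n+1}\in\T x_n$ is robustly $\kl$-stable with respect to $(V,V)$ on $H_+$, so by Definition~\ref{def robust kl} there is a continuous $\sigma:H_+\to\R_+$ with $B[x,\sigma(x)]\subseteq H_+$ for every $x$, with $\sigma>0$ on $H_+\setminus\mathcal A$, with $\mathcal A_\sigma=\mathcal A$, and such that the perturbed inclusion $x_{n+1}\in\T_\sigma x_n$ is $\kl$-stable with respect to $(V,V)$. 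Since $\T$ is single-valued, $\S(x,\T)$ consists of the one solution $\phi(x,n)=\T^n x$, and since~\eqref{condition alpha} gives $V(\T^n x)\le V(x)$ we get $\sup_{n}V(\T^n x)=V(x)$; hence the set $\mathcal A$ of~\eqref{def a} equals $\{x\in H_+:V(x)=0\}$, which by the construction of $V$ (based on the function $F$ of Theorem~\ref{thm Ben}, strictly minimized at $x^*$) is exactly $\{x^*\}$. In particular $\sigma>0$ on $H_+\setminus\{x^*\}$, and I would take $\tau=\sigma$.

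Next, fix a compact set $K\subseteq H_+$ and let $\beta\in\kl$ witness the $\kl$-stability of the $\sigma$-perturbed system, so $V(\phi(x,n))\le\beta(V(x),n)$ for all $x\in H_+$, all $\phi\in\S_\sigma(x,\T)$, and all $n\in\Z_+$. By continuity of $V$ and compactness of $K$, $M:=\sup_{x\in K}V(x)<\infty$; using that $\beta(\cdot,n)$ is increasing and $\beta(M,\cdot)$ non-increasing, $V(\phi(x,n))\le\beta(M,n)\le\beta(M,0)$ for all $x\in K$, $\phi\in\S_\sigma(x,\T)$ and $n\ge1$, while $V(\phi(x,0))=V(x)\le M$. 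Consequently every point visited by any $\phi\in\S_\sigma(K,\T)$ lies in the sublevel set $\hat K:=\{y\in H_+:V(y)\le\max\{M,\beta(M,0)\}\}$, which is a compact subset of $H_+$ because $V$ has compact sublevel sets.

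Finally I would carry out the quantitative Dini step on $\hat K$. Fix $\eps>0$. The set $\hat K\cap\{y:\|y-x^*\|\ge\eps\}$ is compact and avoids $x^*$, so $\delta(\eps):=\inf\{V(y):y\in\hat K,\ \|y-x^*\|\ge\eps\}>0$, since $V$ is continuous and strictly positive on $H_+\setminus\{x^*\}$ (reading the infimum over the empty set as $+\infty$). As $\beta(M,n)\to0$, choose $N=N(K,\eps)$ with $\beta(M,N)<\delta(\eps)$. Then for every $x\in K$, every $\phi\in\S_\tau(x,\T)$ and every $n\ge N$ we have $\phi(x,n)\in\hat K$ and $V(\phi(x,n))\le\beta(M,n)\le\beta(M,N)<\delta(\eps)$, which forces $\|\phi(x,n)-x^*\|<\eps$. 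Since $N$ depends only on $K$ and $\eps$, this gives $\sup_{\phi\in\S_\tau(K,\T)}\|\phi(x,n)-x^*\|\le\eps$ for all $n\ge N$, which is the stated uniform convergence.

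The main obstacle, and essentially the only non-formal point, is the confinement in the second paragraph: one needs that the perturbed trajectories started in $K$ remain in a fixed compact subset of $H_+$, equivalently that $V$ is proper on $H_+$. This is not automatic from Definition~\ref{def lyap}, since with $\omega_1=\omega_2=V$ the sandwich~\eqref{condition between} is vacuous, so it must be read off the explicit $V$ built in Theorem~\ref{thm kl dr}. By~\eqref{prop invariance} one has $\T(H_+)\subseteq\del$, and on $\del$ the function $F$ of Theorem~\ref{thm Ben} tends to $+\infty$ both as $\langle x,\e_1\rangle\to0^+$ (via the term $(\lambda-1)\log\langle x,\e_1\rangle$, with $\lambda-1<0$) and as $\|x\|\to\infty$ (via $\tfrac12\|x-\lambda\e_2\|^2$), so $F$ has compact sublevel sets in $\del$; this coercivity is what makes the constructed $V$ proper, after which the argument above goes through unchanged.
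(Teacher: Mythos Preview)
Your argument has a genuine gap at the confinement step: the claim that $V$ has compact sublevel sets in $H_+$ is false. You justify it by noting that $F$ is coercive on $\del$, but that would make $V=F\circ\T-F(x^*)$ proper on $H_+$ only if $\T:H_+\to\del$ were a proper map, and it is not. By~\eqref{for dr}, every point $t\e_1$ with $t>0$ satisfies $\T(t\e_1)=\e_1+\lambda\e_2$, so $V$ is \emph{constant} along the entire open half-line $\{t\e_1:t>0\}$; this ray is unbounded and accumulates at $0\notin H_+$, so any sublevel set of $V$ at a level at or above $F(\e_1+\lambda\e_2)-F(x^*)$ is non-compact in $H_+$. (More generally, the factor $1-1/\|x\|$ in~\eqref{for dr} kills the $\e_2,\dots,\e_d$ components whenever $\|x\|=1$, so $\T$ collapses large pieces of $H_+$.) Consequently your set $\hat K$ need not be compact, and the infimum $\delta(\eps)$ you take over $\hat K\cap\{\|y-x^*\|\ge\eps\}$ need not be positive.

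The paper avoids this by never asserting properness of $V$ on $H_+$. It first shrinks $\sigma$ to $\tau(x)=\min\{\sigma(x),\tfrac12\langle x,\e_1\rangle\}$; your choice $\tau=\sigma$ already yields $M=\sup_{K}V<\infty$, so that difference is not the crux. The essential divergence is the last step: to pass from $V(\phi(x,n))\to0$ uniformly to $\|\phi(x,n)-x^*\|\to0$ uniformly, the paper does not try to trap the points $\phi(x,n)$ in a fixed compact subset of $H_+$. Instead it argues by contradiction, pushes $\phi_n(x_n,n)$ through one application of $\T$ into $\del$, and then uses Theorem~\ref{thm Ben} on $\del$ (where $F$ \emph{is} coercive and strictly minimized at $x^*$) to bound $V(\phi_n(x_n,n))=F\big(\T\phi_n(x_n,n)\big)-F(x^*)$ away from zero. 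In other words, the coercivity you correctly identified is applied to $\T\phi\in\del$, not to $\phi\in H_+$; transporting it back to $H_+$ via $V$ is exactly the move that fails.
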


%\smallskip

Theorem~\ref{thm converge} says that if we consider paths which are `close enough' to the paths resulting from the Douglas-Rachford iteration, we still have uniform convergence.

%\smallskip

\begin{remark}
Regarding the `boundary' case $\lambda=1$, it was shown in~\cite{BS11} that in this case we have global convergence on $H_+$, even though it need not converge to the intersection point. See Theorem 6.12 in~\cite{BS11}. It would be interesting to see whether a result similar to Theorem~\ref{thm kl dr} or Theorem~\ref{thm converge} can be obtained in this case. \qede
\end{remark}

%\medskip

\section{Proof of Theorem~\ref{thm kl dr} and Theorem~\ref{thm converge}}\label{sec proofs}

For the domain $H_+$ as defined in~\eqref{def H}, define $V: H_+ \to \R_+$,
\begin{align}\label{def V}
V(x) = F(\T x) - F(x^*).
\end{align}
By~\eqref{prop invariance}, $\T x\in \del$ whenever $x\in H_+$, where $\del$ is defined as in~\eqref{def del}. Thus, $V$ is well defined, and by Theorem~\ref{thm Ben}, $V(x) \ge 0$. We would like to show that $V$ is a continuous Lyapunov function that satisfies the conditions of Definition~\ref{def lyap}. The following proposition shows that condition~\eqref{condition alpha} holds for this choice of $V$.

%\smallskip

\begin{prop}\label{prop alpha}
Let $\lambda \in [0,1)$, and let $V:H_+ \to \R_+$ be defined as in~\eqref{def V}. Then there exists a continuous, positive definite function $\alpha:\R_+\to \R_+$ such that for every $x\in H_+$,
\begin{align*}
V(\T x) \le V(x) -\alpha(V(x)).
\end{align*}
\end{prop}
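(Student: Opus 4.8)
The plan is to reduce the claimed inequality to the strict-decrease statement already contained in Theorem~\ref{thm Ben}, exploiting the invariance $\T(H_+)\subseteq\del$ and the compactness structure of $\del$ near the fixed point. Write $g(x) = F(x) - F(\T x)$ for $x\in\del$; by Theorem~\ref{thm Ben} this is a continuous function with $g\ge 0$ on $\del$, $g(x)>0$ for $x\neq x^*$, and $g$ bounded away from zero on $\del\setminus B(x^*,t)$ for each $t>0$. Since $V(x) = F(\T x) - F(x^*)$ and $V(\T x) = F(\T^2 x) - F(x^*)$, we have $V(x) - V(\T x) = g(\T x)$, so the proposition asks for a continuous positive definite $\alpha$ with $g(\T x)\ge \alpha(V(x))$ for all $x\in H_+$. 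The idea is to find such an $\alpha$ comparing the quantity $g(y)$ to $F(y) - F(x^*)$ uniformly over $y\in\del$, and then specialize to $y = \T x$.

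First I would work entirely on $\del$: it suffices to produce a continuous positive definite $\alpha:\R_+\to\R_+$ such that $g(y)\ge \alpha\big(F(y)-F(x^*)\big)$ for all $y\in\del$, because substituting $y=\T x\in\del$ and recalling $F(\T x) - F(x^*) = V(x)$ then gives exactly $g(\T x)\ge\alpha(V(x))$, i.e.\ $V(x) - V(\T x)\ge\alpha(V(x))$. To construct $\alpha$, define for $s>0$
\begin{align*}
\rho(s) = \inf\big\{\, g(y) ~\big|~ y\in\del,\ F(y) - F(x^*)\ge s \,\big\},
\end{align*}
and $\rho(0)=0$. For each fixed $s>0$ the sublevel-type constraint $F(y)-F(x^*)\ge s$ keeps $y$ away from $x^*$ (since $F$ is continuous and $F(x^*)$ is its strict minimum), and one checks that the feasible set is closed; combined with the fact that $F(y)-F(x^*)\to\infty$ as $y$ leaves every bounded region of $\del$ (the $\tfrac12\|x-\lambda\e_2\|^2$ term dominates, and the logarithmic terms are controlled since $\langle x,\e_1\rangle\in(0,1]$ forces $(\lambda-1)\log\langle x,\e_1\rangle\ge 0$), the infimum is attained on a nonempty set on which $g>0$, so $\rho(s)>0$. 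Also $\rho$ is non-decreasing in $s$. Then take any continuous, strictly increasing, positive definite $\alpha\le\rho$ with $\alpha(0)=0$ — for instance mollify and dominate $\rho$ from below — and we are done.

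The main obstacle I anticipate is making the two compactness/coercivity claims precise: (i) that for each $s>0$ the set $\{y\in\del : F(y)-F(x^*)\ge s\}$ intersected with a neighbourhood where $g$ could approach $0$ is in fact bounded away from $x^*$ and has a lower bound on $g$ that is genuinely positive, and (ii) that $F$ is coercive on $\del$ so that the infimum defining $\rho(s)$ is actually a minimum rather than an infimum approached at infinity. Claim (i) is essentially a restatement of the last assertion of Theorem~\ref{thm Ben} once one notes that $\{F-F(x^*)\ge s\}\subseteq \del\setminus B(x^*,t_s)$ for a suitable $t_s>0$ depending on $s$ (using continuity of $F$ at $x^*$). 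Claim (ii) requires the elementary estimate on the four terms of $F$ in~\eqref{def F} on the strip $\del$; the only slightly delicate point is the behaviour as $\langle x,\e_1\rangle\to 0^+$, where $(\lambda-1)\log\langle x,\e_1\rangle\to+\infty$ when $\lambda<1$, so $F$ is bounded below there as well and no escape of the minimizing sequence to the boundary $\{\langle x,\e_1\rangle=0\}$ can occur. Once these are in hand the construction of $\alpha$ is routine, and continuity of $V$ (hence that it is a legitimate Lyapunov candidate) follows from continuity of $F$ and of $\T$ on $H_+$.
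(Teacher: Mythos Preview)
Your approach is essentially the paper's: reduce to an inequality $g(y)\ge\alpha\big(F(y)-F(x^*)\big)$ on $\del$ (the paper writes $W$ for your $g$ and $U$ for $F-F(x^*)$), then specialise to $y=\T x$. The paper carries this out in two stages---first $g_0(t)=\inf_{\del\setminus B(x^*,t)}W$, shown continuous and positive definite, then $\alpha(t)=\inf\{g_0(\|y-x^*\|):y\in\del,\ U(y)\ge t\}$, again shown continuous---whereas you collapse the two steps into a single $\rho$ and then pass to a continuous positive definite minorant. Both routes work; the paper's is more explicit about continuity, yours is slightly more direct.

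One point to clean up: your first justification that $\rho(s)>0$ via coercivity of $F$ and ``the infimum is attained'' does not stand as written. The feasible set $\{y\in\del:F(y)-F(x^*)\ge s\}$ is \emph{unbounded}---coercivity of $F$ tells you this set contains everything far from the origin, not that it is compact---so there is no reason the infimum of $g$ over it should be attained, and nothing you say about $F$ at infinity or near $\langle x,\e_1\rangle=0$ bears on the behaviour of $g$ there. The correct argument is exactly your Claim~(i): continuity of $F$ at $x^*$ gives $\{F-F(x^*)\ge s\}\subseteq\del\setminus B(x^*,t_s)$ for some $t_s>0$, and then the last assertion of Theorem~\ref{thm Ben} yields $\rho(s)\ge\inf_{\del\setminus B(x^*,t_s)}g>0$ directly, with no compactness or attainment needed. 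Your Claim~(ii) is therefore superfluous and can be dropped.
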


\begin{proof}
Let $\del$ be the domain defined in~\eqref{def del}, and define $U, W:\del \to \R_+$,
\begin{align}\label{def U}
U(x) = F(x) - F(x^*),
\end{align}
and
\begin{align}\label{def w}
W(x) = U(x) - U(\T x) = F(x) - F(\T x).
\end{align}
By Theorem~\ref{thm Ben}, both $U$ and $W$ are continuous and positive on $\del$, and are equal to 0 if and only if $x=x^*$. Define also $g:\R_+\to \R_+$,
\begin{align*}%\label{def g}
g(t) = \inf_{x\in \del \setminus B(x^*,t)}W(x).% = \inf_{x\in \del \setminus B(x^*,t)}\big[F(x) - F(\T x)\big] . % = -\sup_{x\in \del \setminus B(x^*,t)}\big[V(x) - V(\T x)\big].
\end{align*}
Clearly $g$ is non-decreasing. Also, by Theorem~\ref{thm Ben}, $g$ is non-negative and $g(t) = 0$ if and only if $t=0$, and so $g$ is positive definite. Note that by~\eqref{def w}, combined with~\eqref{for dr} and~\eqref{def F},
\begin{align}\label{with log} 
W(x) = \psi(x) + (\lambda-1)\left(\log \langle x, \e_1\rangle - \log\left(\frac{\langle x, \e_1\rangle}{\|x\|}\right)\right) = \psi(x)+ (\lambda -1) \log \|x\| ,
\end{align}
where $\psi$ is a continuous function on $\big\{x\in \R^d~\big|~ -1 \le \langle x,\e_1 \rangle \le 1\big\}$. Also, $\log \|x\|$ is continuous on $\R^d\setminus \{0\}$ which contains $\del$. Since $g$ is non-decreasing, in order to show that it is continuous it is enough to show that for every $\eps >0$ and $t\ge 0$, there exists $\delta >0$ such that $g(t+\delta) \le g(t) + \eps$. Indeed, let $t \ge 0$ and $\eps>0$. Assume that $x\in \del\setminus B(x^*,t)$ is such that 
\begin{align}\label{small with eps}
W(x) \le g(t) + \eps.
\end{align} 
Since by~\eqref{with log} $W$ is continuous on $\del$ and $W(x) \stackrel{x\to 0}{\longrightarrow} \infty$, it follows that we may choose $x\in \mathrm{int}\big(\del\setminus B(x^*,t)\big)$, the interior of $\del \setminus B(x^*,t)$. Given $\delta >0$, define $x_\delta$ as follows,
\begin{align*}
x_\delta = x^* + \left(1+\frac{\delta}{\|x-x^*\|}\right)(x-x^*).
\end{align*}
Then  $\|x_\delta - x\| = \delta$ and $\|x_\delta - x^*\| = \|x-x^*\| + \delta$. Since $x\in \del\setminus B(x^*,t)$, we have $\|x-x^*\| \ge t$ and so $\|x_\delta - x^*\| \ge t+\delta$. Since $x\in \mathrm{int}\big(\del\setminus B(x^*,t)\big)$, if $\delta$ is sufficiently small then we may assume that $x_\delta \in \del$. Altogether, we have that $x_\delta \in \del \setminus B(x^*,t+\delta)$. Since $W$ is continuous on $\del$, if $\delta$ is sufficiently small, we have $W(x_\delta) \le W(x) + \eps$. Therefore,
\begin{align*}
g(t+\delta) = \inf_{x\in \del\setminus B(x^*,t+\delta)}W(x) \le W(x_\delta) \le W(x) + \eps \stackrel{\eqref{small with eps}}{\le} g(t) + 2\eps.
\end{align*}
Since $\eps>0$ is arbitrary, this shows that $g$ is continuous.

%\smallskip

Next, define $\alpha:\R_+\to \R_+$,
\begin{align*}
\alpha(t) = \inf \big\{g(\|y-x^*\|)~\big|~ y\in \del, ~~U(y) \ge t\big\}.
\end{align*}
Then $\alpha(0) = 0$, and $\alpha$ is non-negative and non-decreasing. We would also like to show that $\alpha$ is positive definite and continuous. Indeed, assume that $\alpha(0) = 0$. Then for every $\eps>0$, there exists $y_\eps \in \del$ such that $g(\|y_\eps - x^*\|) \le \eps$. Since $g$ is positive definite and continuous, it follows that $y_\eps \stackrel{\eps \to 0}{\longrightarrow} x^*$. Thus, we must have $t=0$. This shows that $\alpha$ is positive definite. In order to prove that $\alpha$ is continuous, let $\eps >0$ and $t\ge 0$. Let $y\in \del$ with $U(y) \ge t$ be such that $g(\|y-x^*\|) \le \alpha(t) + \eps$. Let $\delta>0$, and choose $y_\delta \in \del$ such that $U(y_\delta) \ge t+\delta$. Since $U$ is continuous on $\del$, it follows that we can choose $y_\delta$ such that $y_\delta \stackrel{\delta\to 0}{\longrightarrow} y$. Since $g$ is continuous, if $\delta$ is sufficiently small, then
\begin{align*}
g(\|y_\delta - x^*\|) \le g(\|y-x^*\|) + \eps \le \alpha(t)+2\eps,
\end{align*}
and taking the infimum over the left hand side gives
\begin{align*}
\alpha(t+\delta) \le \alpha(t) + 2\eps.
\end{align*}
Since $\eps>0$ is arbitrary and since $\alpha$ is non-decreasing, it follows that $\alpha$ is continuous. 

%\smallskip

To conclude the proof, note that
\begin{align*}
U(x) - U(\T x) \ge \inf_{y\in \del\setminus B(x^*, \|x-x^*\|)}\big[U(y) - U(\T y)\big] = g(\|x-x^*\|) \ge \alpha(U(x)).
\end{align*}
By~\eqref{prop invariance}, $\T x\in \del$ whenever $x\in H_+$. Thus,
\begin{align*}
V(x) \stackrel{\eqref{def V}}{=} F(\T x) - F(x^*) \stackrel{\eqref{def U}}{=} U(\T x).
\end{align*}
Therefore,
\begin{align*}
V(x) - V(\T x) = U(\T x) - U(\T^2 x) \ge \alpha(U(\T x)) = \alpha(V(x)),
\end{align*}
and the proof is complete.
\end{proof}

%\smallskip

We are now in a position to prove Theorem~\ref{thm kl dr} and Theorem~\ref{thm converge}.

%\smallskip

\begin{proof}[Proof of Theorem~\ref{thm kl dr}]
By the definition of $V$~\eqref{def V}, together with~\eqref{prop invariance} and~\eqref{def F}, it follows that $V$ is continuous on $H_+$, which is an open set. Therefore, since we choose $\omega_1=\omega_2=V$, all functions are continuous. Clearly by the choice of $\omega_1$, $\omega_2$, it follows that~\eqref{condition between} holds with $\alpha_1(t) = \alpha_2(t) = t$, which is of class $\mathcal K_\infty$. Condition~\eqref{condition alpha} holds by Proposition~\ref{prop alpha}. Finally, it follows directly from~\eqref{for dr} that if $x\in H_+$ is such that $\T x = x^*$, then $x=x^*$. Thus, by Theorem~\ref{thm Ben} and~\eqref{def V}, we have that $V(x)=0 \iff x=x^*$. On the other hand, by the definition of $\mathcal A$~\eqref{def a} and the choice of $\omega_1$, 
\begin{align*}
x\in \mathcal A \iff \sup_{n\in \Z_+}\sup_{\phi \in \S(x)}V(\phi(n,x)) = 0 \iff \phi(n,x) = x^*, \forall n \in \Z_+, \forall \phi \in \S(x).
\end{align*}
Since the only fixed point of $\T$ in $H_+$ is $x^*$, it follows that $x\in \mathcal A \iff x=x^*$. Hence,~\eqref{condition iff} holds as well, and so $V$ is a continuous Lyapunov function on $H_+$ in the sense of Definition~\ref{def lyap} with respect to $(\omega_1,\omega_2)$. Therefore, by Theorem~\ref{thm kl}, the Douglas-Rachford iteration~\eqref{iterate seq} is robustly $\kl$-stable on $H_+$, and this completes the proof.
\end{proof}

%\smallskip

\begin{proof}[Proof of Theorem~\ref{thm converge}]
By Theorem~\ref{thm kl dr}, the Douglas-Rachford iteration~\eqref{iterate seq} is robustly $\kl$-stable on $H_+$. Therefore, there exists $\sigma:H_+\to \R_+$ which satisfies $\sigma(x) = 0 \iff x=x^*$ and such that the $\sigma$-perturbation of $\T$ is $\kl$-stable on $H_+$. Define $\tau:H_+\to \R_+$,
\begin{align}\label{def tau}
\tau(x) = \min\left\{\sigma(x), \frac 1 2\langle x,\e_1\rangle\right\}.
\end{align}
Note that $\tau \le \sigma$ and also $\tau(x) = 0 \iff x=x^*$. Note also that by Remark~\ref{rmk smaller}, the $\tau$-perturbation of $\T$ is $\kl$-stable. Thus, by Theorem~\ref{thm kl dr}, there exists a function $\beta$ of class $\kl$, such that for all $x\in K$, all $\phi \in \S_{\tau}(x, \T)$, and all $n \in \Z_+$,
\begin{align}\label{bound with beta}
V(\phi(x,n)) \le \b(V(x), n).
\end{align}

%\smallskip

If $K\subseteq H_+$ is compact, it is bounded and there exists $b\in (0,\infty)$ such that $\inf_{x\in K}\langle x,\e_1\rangle \ge b$. Therefore, by the definition of $\tau$, it follows that $K_\tau$ is bounded and $\inf_{x\in K_\tau}\langle x,\e_1\rangle \ge \frac 1 2b >0$. This means that $\overline{K}_\tau$, the closure of $K_\tau$, is compact and satisfies $\overline{K}_\tau \subseteq H_+$. Since $V$ is continuous on $H_+$, there exists $M\in (0,\infty)$ such that $\sup_{x\in K_\tau}V(x) = M$. Therefore, given $n\in \Z_+$,
\begin{align}\label{bound separate}
\sup_{\phi\in \S_{\tau}(K,\T)}V(\phi(x,n)) \stackrel{\eqref{bound with beta}}{\le} \sup_{x\in K_\tau}\b(V(x),n) \stackrel{(*)}{\le} \b\Big(\,\sup_{x\in K_\tau}V(x), n\Big) = \b\left(M,n\right),
\end{align}
where in ($*$) we used the fact that $\beta(\cdot, n)$ is increasing for all $n\in \Z_+$. Thus, since $\beta(M, n)\stackrel{n\to \infty}{\longrightarrow}0$ for all $M\in \R_+$,~\eqref{bound separate} gives
\begin{align}\label{v to zero}
\lim_{n\to \infty}\sup_{\phi\in \S_{\tau}(K,\T)}V(\phi(x,n)) = 0.
\end{align}
Assume that there exist $\{x_n\}_{n=0}^{\infty}\subseteq K$, $\{\phi_n\}_{n=0}^{\infty} \subseteq S_\tau(K,\T)$, and $\eps >0$ such that 
\begin{align*}
\inf_{n \in \Z_+}\|\phi_n(x_n,n)-x^*\| \ge \eps. 
\end{align*}
Then since $\T$ is continuous on $H_+$ and since $\T x = x \iff x=x^*$, there exists $\eps'>0$ such that
\begin{align*}
\inf_{n \in \Z_+}\|\T(\phi_n(x_n,n))-x^*\| \ge \eps',
\end{align*}
and so by Theorem~\ref{thm Ben}, there exists $\eps''>0$ such that 
\begin{align*}
\inf_{n \in \Z_+}V(\phi_n(x_n,n)) = \inf_{n \in \Z_+}\big[F(\T(\phi_n(x_n,n)))-F(x^*)\big] \ge \eps''.
\end{align*}
But this is a contradiction to~\eqref{v to zero}. Therefore, 
\begin{align*}%\label{case set in d}
\lim_{n\to \infty} \sup_{\phi \in S_\tau(K,\T)}\|\phi(x,n)-x^*\| = 0,
\end{align*}
and this completes the proof.
\end{proof}

\begin{remark}
The choice of $1/2$ in the definition of $\tau$~\eqref{def tau} is not of any significance. One can choose any number in $(0,1)$. \qede
\end{remark}

%\smallskip

\subsection*{Acknowledgments} Many thanks to Bj\"orn R\"uffer for some helpful discussions, and in particular for bringing to our attention the results of~\cites{Kel15, KT05}. Many thanks also to Brailey Sims for some very valuable comments on this note.

\end{document}